\author{A. Sedunova}
\numberwithin{figure}{section}
\numberwithin{table}{section}
\numberwithin{equation}{section}
 \let\footnote=\endnote
\newenvironment{proof}{{\bf Proof.}}{$\hfill \square $}
\newtheorem{example}{{\bf Example}}{}
\DeclareMathOperator{\mmod}{mod}
\DeclareMathOperator{\ord}{ord}
\DeclareMathOperator{\prob}{Prob}
\newcommand*{\bfrac}[2]{\genfrac{}{}{0pt}{}{#1}{#2}}
\newtheorem{lemma}{Lemma}
\newtheorem{theorem}{Theorem}
\newtheorem{corollary} {Corollary}
\begin{document}
\global\long\def\veps{\varepsilon}
\global\long\def\k{\kappa}
\global\long\def\s{\sigma}
\global\long\def\om{\omega}
\global\long\def\r{\rho}
\global\long\def\P{\mathbf{P}}

\global\long\def\R{\mathbb{R}}
\global\long\def\F{\mathbb{F}}
\global\long\def\E{\mathbb{E}}
\global\long\def\W{\mathcal{W}}
\global\long\def\Pc{\mathcal{P}}
\global\long\def\Cc{\mathcal{C}}

\global\long\def\le{\leqslant}
\global\long\def\ge{\geqslant}

\title{On the Bombieri-Pila Method Over Function Fields}
\maketitle
\begin{abstract}
In \cite{b-pila} E. Bombieri and J. Pila introduced a method for bounding
the number of integral lattice points that belong to a given arc
under several assumptions. 
In this paper we generalize the Bombieri-Pila method to
the case of function fields of genus 0 in one variable. We then apply the result to counting the number of elliptic curves contain in an isomorphism class and with coefficients in a box.
\end{abstract}

\section{Introduction}

In  \cite{b-pila}  E. Bombieri and J. Pila proved that
if $\Gamma$ is a subset of an irreducible algebraic curve of degree $d$ inside a square of side $N$, then the number of lattice points on $\Gamma$ is bounded by $c(d,\veps)N^{\frac{1}{d}+\veps}$ for
any $\veps>0$, where the constant $c(d,\veps)$ does not depend on
$\Gamma$. There are many analogues of this remarkable
result. For example, one can be interested in finding a bound for
a number of solutions of $f(x,y)=0 \mod p$ with $x\in I$,
$y\in J$, where $I$ and $J$ are short intervals in $\mathbb{Z}/p\mathbb{Z}$ (see \cite{shparlinski1} and \cite{shparlinski2}).  Such results are $p$-analogues
of the Bombieri-Pila bound. (Here we should assume that the lengths
of $I$ and $J$ are much shorter than $p$, so that the Weil bound and
other standard methods cannot be applied.)

One can go further and look for a function field analogue. Here we work in a finite field $\F_{q^n}$ modelled as  $\F_q[T]/f(T)$ where $f$ is
a fixed irreducible polynomial of degree $n$ and $T$ is a formal variable. Then an
interval is the set of polynomials of the form $X+Y=X(T)+Y(T)$, where $X\in \F_q[T]$ is a fixed polynomial
and $Y(T)$ runs through all polynomials of degree bounded by a given natural number. 
This point of view was used by J. Cilleruelo and I. Shparlinski in \cite{shparlinski3} for obtaining some bounds on the number of solutions of polynomial congruences modulo a prime with variables in short intervals. The same authors also formulated \cite[Problem 9]{shparlinski3}, which is solved here.

Our main goal is to prove

\begin{theorem}\label{thm}

Let $\Cc$ be an irreducible algebraic curve of degree $d$ over $\F_q[T]$, $q$ is a prime power. Define $S$ as the set of points on $\Cc$ inside $I^2$, where $I$ is a set of polynomials $X \in \F_q[T]$ with $\deg X \le n$ and $|I|=q^{n+1}$. 
Then
$$|S|\ll_{d,\veps} |I|^{\frac{1}{d}+\veps}.$$

\end{theorem}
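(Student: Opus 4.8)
The plan is to transport the Bombieri--Pila determinant method from $\mathbb Z\subset\mathbb Q$ to $\F_q[T]\subset\F_q(T)$. Because the base field $\F_q(T)$ has genus $0$, it carries a single place at infinity, normalised by $|f|=q^{\deg f}$, which plays the role of the archimedean absolute value on $\mathbb Q$, and $\F_q[T]$ is the associated ring of integers. The arithmetic fact underlying the classical method is that a nonzero integer has absolute value at least $1$; its exact analogue here is that a nonzero element of $\F_q[T]$ has degree at least $0$. Accordingly I will construct an auxiliary determinant $\det M\in\F_q[T]$ whose degree can be forced to be strictly negative, so that $\det M=0$ and the points of $S$ must satisfy a nontrivial polynomial relation.

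I would first partition $I$ into clusters: writing $X=\sum_{i\le n}\xi_iT^i$ and fixing the top coefficients $\xi_{m+1},\dots,\xi_n$ for a parameter $m<n$, one obtains $q^{\,n-m}$ clusters, within each of which any two values satisfy $\deg(X-X')\le m$. One may assume $\deg X=n$ throughout, the lower-degree points being counted identically at each scale $n_0<n$ and summed as a geometric series. Since every point of $S$ lies on $\Cc$, a cluster in $X$ determines $Y$ up to the finitely many local branches, so it suffices to bound $|S|$ on one cluster and multiply by $q^{\,n-m}$. On such a cluster, fix an auxiliary degree $e$ and let $\phi_1,\dots,\phi_D$ be a monomial basis of the space of $X^aY^b$ with $a+b\le e$ taken modulo the defining polynomial $F$ of $\Cc$; by the Hilbert function of a degree-$d$ plane curve, $D=\binom{e+2}{2}-\binom{e-d+2}{2}=d\,e+O_d(1)$, and it is this value $D\sim de$, not $\binom{e+2}{2}$, that yields the exponent $1/d$. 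For any $D$ points $P_1,\dots,P_D$ of $S$ in the cluster set $M=(\phi_j(P_i))_{i,j}$ with entries in $\F_q[T]$. If $\det M=0$ for every such choice, then the evaluation matrix has rank $<D$ and all cluster points lie on one auxiliary curve $G=\sum_j\lambda_j\phi_j=0$ with $\lambda_j\in\F_q[T]$; as the $\phi_j$ are independent modulo $F$ we have $F\nmid G$, so $F$ and $G$ share no component and B\'ezout bounds the cluster's points by $\deg F\cdot\deg G\le d\,e$.

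The crux is the degree estimate forcing $\det M=0$. Parametrising the local branch as $Y=g(X)$ over the completion at infinity, each $\phi_j$ becomes a power series in $X$, which I expand about the common centre $x_0$ (of degree $n$) of the cluster by Hasse (divided) derivatives, $\phi_j(x_0+t)=\sum_k c_{j,k}t^k$; Hasse derivatives are indispensable here because in characteristic $p$ ordinary derivatives and Wronskians degenerate. A Cauchy--Binet expansion in the completion presents $\det M$ as a convergent sum of products $\det(c_{j,k_\ell})\cdot\det(t_i^{k_\ell})$; the term of lowest order $\sum_\ell k_\ell=\binom{D}{2}$ dominates at infinity, and its coefficient minor is the Hasse--Wronskian, nonzero precisely because the $\phi_j$ are linearly independent. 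As the cluster has diameter $q^m$ while $\deg x_0=n$, each Hasse derivative lowers the degree by $n$ and each power of $t$ raises it by at most $m$, so that
\[
\deg_T\det M\le n\sum_j(a_j+b_j)-(n-m)\binom{D}{2},\qquad \sum_j(a_j+b_j)=\tfrac12\,d\,e^2+O_d(e).
\]
Hence $\deg_T\det M<0$ as soon as $n-m>n\,\dfrac{\sum_j(a_j+b_j)}{\binom{D}{2}}=\dfrac{n}{d}+O_d\!\left(\dfrac{n}{e}\right)$, which forces the vanishing and the auxiliary curve above.

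Finally I would optimise by fixing $e=e(d,\veps)$ so large that $n-m$ may be chosen just above $n(\tfrac1d+\veps)$ while still keeping $\deg_T\det M<0$ on every cluster. There are then $q^{\,n-m}\le q^{\,n(1/d+\veps)+O(1)}$ clusters, each carrying $\ll_d d\,e=O_{d,\veps}(1)$ points of $S$, so that $|S|\ll_{d,\veps}|I|^{1/d+\veps}$ after a harmless adjustment of $\veps$. The main obstacle is to make the displayed estimate rigorous in positive characteristic: to bound the degrees of the Hasse derivatives of the branch $g$, to prove the nonvanishing of the Hasse--Wronskian so that the lowest-order Cauchy--Binet term genuinely survives, and to justify convergence of the expansion at the place at infinity. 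Granting these, the integrality of the infinite place closes the argument exactly as the integrality of $\mathbb Z$ does in \cite{b-pila}.
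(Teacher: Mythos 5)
Your proposal is the ``first plausible way'' that the paper's introduction mentions and deliberately avoids: a direct transport of the Bombieri--Pila/Heath-Brown determinant method to the place at infinity of $\F_q(T)$, with Hasse derivatives replacing ordinary ones. The global architecture (clustering in $X$, Cauchy--Binet expansion, B\'ezout, optimisation of $e$ and $m$ giving exponent $1/d$) is coherent and the exponent arithmetic checks out; but the step you defer at the end is precisely the step that constitutes the whole difficulty of the theorem. The paper states explicitly that the obstruction to following \cite{b-pila} is the absence of a function-field analogue of the mean value theorem (Lemma 2 of \cite{b-pila}, cf.\ Lemma 1 of \cite{sw-dy}); in your scheme this is exactly the estimate $\deg_T c_{j,k}\le n(a_j+b_j)-kn$ for the Hasse--Taylor coefficients of the branch, \emph{uniformly in the curve} $\Cc$. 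You cannot get this for free. The coefficients of $F$ are arbitrary elements of $\F_q[T]$ and are not integral at the infinite place, so there is no analogue of the automatic integrality that makes the $p$-adic determinant method work at finite places; one would need a preparation/subdivision of each cluster into $O_d(1)$ pieces on which the branch coefficients are controlled, and nothing in the proposal supplies it (your own example-free assertion ``each Hasse derivative lowers the degree by $n$'' already fails for $Y=AX$ with $\deg_T A$ huge, which is only excluded after such a preparation). Worse, in characteristic $p$ the branches of $\Cc$ at infinity need not admit Puiseux expansions at all, because of wild ramification (Artin--Schreier-type equations $Y^p-Y=X$ have no fractional-power-series solutions), so even writing $Y=g(X)$ as a convergent series whose coefficients you then bound requires an argument. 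Your closing sentence (``Granting these\ldots'') concedes that these points are assumed rather than proved; they are the actual content of the theorem.

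There is also a concrete false claim: the assertion that the Hasse--Wronskian $\det\bigl(c_{j,k_\ell}\bigr)$ with $(k_1,\ldots,k_D)=(0,1,\ldots,D-1)$ is nonzero ``precisely because the $\phi_j$ are linearly independent'' fails in characteristic $p$. The functions $1$ and $X^p$ are linearly independent, yet the $k$-th Hasse derivative of $X^p$ is $\binom{p}{k}X^{p-k}=0$ for $0<k<p$, so that Wronskian vanishes identically; in general the nonvanishing order sequence has gaps, $k_\ell\ge \ell-1$ possibly strictly. This particular failure happens to be harmless for your purposes, since for the upper bound you only need that every Cauchy--Binet term has $\sum_\ell k_\ell\ge\binom{D}{2}$ (gaps only help), but it illustrates that the characteristic-$p$ analysis cannot be carried over by analogy. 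For contrast, the paper's proof avoids all local-analytic input at the infinite place: an interpolation argument replaces $F$ by a degree-$d$ form $G$ defining the same curve, the Weil bound shows that $S$ meets at most $\bigl(1+\frac{\veps}{2}\bigr)|f|$ residue classes modulo every irreducible $f$ of large norm, and Lemma \ref{helf-ven} (the Helfgott--Venkatesh adaptation) extracts the saving from the divisibility of a determinant by \emph{many finite primes} $f$, together with the fact that a $\W$-curve meets $\Cc$ in $O_{\W}(1)$ points; no branch expansions, Wronskians, or mean value theorem appear anywhere.
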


One can pose a question: why can we not just follow the Bombieri-Pila approach in order to get Theorem \ref{thm}?
Unfortunately, in this case we will cross some difficulties in getting Lemma 2 of \cite{b-pila}, since we do not have the necessary analogue of the mean value theorem in function fields (see \cite{sw-dy}, Lemma 1). There seem to be at least two plausible ways to avoid this difficulty. The first one consists in getting a function field variant of Theorem 4  in Heath-Brown's article \cite{h-b}. The second one, which we will follow here, is to adapt the method of Helfgott-Venkatesh \cite{helf-ven}. 

We will need analogues of Propositions 3.1 and 3.2 of \cite{helf-ven}. Combining and developing the original ideas of \cite{b-pila} together with an adaptation of some results of \cite{helf-ven} will lead us to our main result.

After that we will use Theorem \ref{thm} to get some applications, such as a calculation of the number of isomorphism classes which are represented by elliptic curves $E_{a,b}$ parametrized by coefficients $a,b \in \F_q[T]$ lying in a small box, say, $I^2$. Using this result one can calculate the number of elliptic curves lying in a given isomorphism class with coefficients lying in a small box. To proceed we will work with ideas proposed in \cite{shparlinski2}.

\section{Auxiliary statements}

Let $X$ and $Y$ be
variables with values in $\F_{q}[T]$, i.e. their values are of the form $X=X(T)=a_{0}+a_{1}T+\ldots+a_{n}T^{n}$, $Y=Y(T)=b_{0}+b_{1}T+\ldots+b_{m}T^{m}$,
where $T$ is a place holder, $a_{i},b_{j}\in\F_{q}$, $i=0,\ldots, \deg X=n$, $j = 0,\ldots,\deg Y=m$. For $X\in \F_q[T]$ we denote by $|X|$ its norm: $|X|=q^{\deg X}$.

Define "an interval" $I$  as the set of polynomials on a formal variable
$T$ of the form $X(T)+Y(T)$, where $X(T)$ is a fixed polynomial
and $Y(T)$ runs through all polynomials of degree less or equal than a given integer.

In what follows $\Cc$ is an irreducible algebraic curve of degree $d$ over $\F_q[T]$, which is described by $F(X,Y)=0$, $F(X,Y) \in (\F_q[T])[X,Y]$. Write $S$ for the set of points on $\Cc$ inside $I^2$.

For any $F(X,Y) \in (\F_q[T])[X,Y]$ we write $\deg_X F$ and $\deg_T F$ to denote the degree of a polynomial $F$ with respect to $X$ and $T$ respectively. 
We also use the standard notation $\deg F(X,Y)$ for
the degree of $F(X,Y)$ as a polynomial in $X$ and $Y$.

Let $\W$ be a set consisting of finitely many linearly independent polynomials $F \in (\F_q[T])[X,Y]$ including the constant polynomial ${\bf 1}$. Write $d_\W$ for the total degree of all elements of $\W$.
Assume that the elements of $\W$ separate points, meaning that $\forall (X_1,Y_1),(X_2,Y_2) \in (\F_q[T])^2$ there is an $F \in \W$ such that $F(X_1,Y_1) \neq F(X_2,Y_2)$. 
We define a $\W$-curve to be an affine algebraic curve described by an equation $G(X,Y)=0$, where all the monomials of $G$ belong to $\W$.\\
During the proof of Theorem \ref{thm} we will use the following choice of $\W$:\newline
\begin{example}\label{wset}
Define $\W=\W_{d,M}$ as
$$\W=\{X^iY^j|\;i \le d, j\le M\},$$
where $d$ and $M$ are given numbers.
Then $|\W|=(d+1)(M+1)$,
$d_{\W}=(d+1)(M+1)\frac{d+M}{2}$.
The $\W$-curves are plane curves of degree less or equal  than $d$ and $M$ in $X$ and $Y$ respectively.
\end{example}
This choice is taken straight from the work of Bombieri and Pila \cite{b-pila}.

\begin{lemma}\label{helf-ven}
Let $\Cc$ be an irreducible algebraic curve of degree $d$ over $\F_q[T]$ and let $S$ be the set of points on $\Cc$ inside $I^2$.
Suppose that 
the number of residues $\{(X,Y) \mmod f,\; X,Y \in S\}$ is at most $\alpha |f|$ for some fixed $\alpha >0$ and for every irreducible polynomial $f \in \F_q[T]$. 
Assume that $\W$ is chosen in a way that any $\W$-curve contains at most constant number $C$ of elements of $S$.
Then the following holds
$$|S|\ll_{\W} |I|^{\frac{2\alpha d_\W}{\om(\om-1)}+o_{\alpha,C}(1)},$$
where $\om=|\W|$. 
\end{lemma}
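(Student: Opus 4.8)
The plan is to run a Bombieri--Pila determinant argument in which the roles played in \cite{b-pila} by the mean value theorem and by integrality are replaced, following \cite{helf-ven}, by an infinite-place degree estimate and by the finite-prime reduction hypothesis. Fix the basis $\W=\{F_1,\dots,F_\om\}$, and for any $\om$ points $P_1,\dots,P_\om\in S$ form the matrix $M=(F_j(P_i))_{i,j}$ with entries in $\F_q[T]$ and set $\Delta=\det M$. Two observations drive everything. First, $\Delta=0$ exactly when the columns are linearly dependent, i.e. when some $G=\sum_j c_jF_j$ (a $\W$-curve) passes through all $\om$ chosen points; in particular, if the whole $N\times\om$ evaluation matrix has rank $<\om$ then a single $\W$-curve contains all of $S$ and we are done with $|S|\le C$, so I may assume full rank and produce nonzero determinants. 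Second, since $\Delta\in\F_q[T]$, for $\Delta\neq0$ its finite-prime valuations and its $T$-degree are linked by the identity $\deg_T\Delta=\sum_f v_f(\Delta)\deg_T f$, which is the bookkeeping device that converts divisibility into a lower bound on the degree.

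First I would establish the \emph{upper} bound on $\deg_T\Delta$, the analogue of Lemma~2 of \cite{b-pila} and of Proposition~3.2 of \cite{helf-ven}. The crude estimate is immediate: every coordinate of a point of $S$ has $T$-degree at most $n$, so each entry $F_j(P_i)$ has $\deg_T\le(\deg F_j)\,n$ and hence $\deg_T\Delta\le n\sum_j\deg F_j=n\,d_\W$. The essential refinement, which is the substitute for the missing mean value theorem, is to sharpen this when the $P_i$ are confined to a common short sub-interval of $I$, that is, when they agree in their top coefficients so that all coordinate differences have small degree. Here I would expand $F_j(P_0+\xi_i)$ by Hasse (divided-power) derivatives in the low-degree increments $\xi_i$ and feed this through the Cauchy--Binet formula: the antisymmetrisation forces the leading contributions to cancel, and the surviving terms lower $\deg_T\Delta$ by an amount proportional to the agreement level, the proportionality constant being governed by the sum of the $\om$ smallest monomial degrees, i.e. essentially $d_\W$. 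I expect this to be the main obstacle, since over $\F_q(T)$ one must run the Taylor expansion with divided powers (to avoid characteristic issues) and control the combinatorics of the monomials of $\W$ rather than invoke calculus.

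Next I would prove the \emph{lower} bound via the hypothesis, the analogue of Proposition~3.1 of \cite{helf-ven}. Because $F_j(P)\bmod f$ depends only on $P\bmod f$, the rows of $M$ take at most $\alpha|f|$ residues modulo $f$, and, by a Hensel lifting at primes of good reduction together with the Weil bound, at most $\alpha|f|^{k}$ residues modulo $f^{k}$. Grouping equal residues and subtracting rows then extracts a factor $f^{e_f}$ from $\Delta$, with $e_f\ge\sum_{k\ge1}\bigl(\om-\#\{P_i\bmod f^{k}\}\bigr)_{+}$, so that each congruence among the $P_i$ contributes to $\deg_T\Delta$ through the degree identity above. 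Comparing the two estimates: once the selected points are forced into a cell that is simultaneously short at infinity (controlling the upper bound) and concentrated at the relevant prime power (forcing the divisibility), the lower bound overtakes the degree ceiling, $\Delta$ is compelled to vanish, and the $\om$ points lie on a $\W$-curve; by hypothesis at most $C$ points of $S$ then inhabit that cell.

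Finally I would assemble the global count. Partitioning $I^{2}$ into cells cut out by the top-coefficient and congruence conditions above, every cell meeting $S$ in at least $\om$ points is swept out by a single $\W$-curve and so contributes at most $\max(\om-1,C)$ points, while the number of cells is controlled by the $\alpha|f|^{k}$ residue bound. The count is then $|S|\le(\text{number of cells})\cdot\max(\om-1,C)$, and there is one free parameter, namely the cell size (equivalently the prime power used), which trades the number of cells against the forcing threshold coming from the balance between the degree ceiling $n\,d_\W$ and the pairwise divisibility weighted by $\binom{\om}{2}$. Carrying out this optimisation is the routine but delicate computation that produces the exponent $\tfrac{2\alpha d_\W}{\om(\om-1)}$, with the error term $o_{\alpha,C}(1)$ absorbing the sub-leading contributions of the prime counts and the dependence on $C$. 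Thus the hard part is genuinely the degree estimate of the second paragraph, exactly as anticipated in the discussion following Theorem~\ref{thm}; the remaining steps are standard determinant-method bookkeeping.
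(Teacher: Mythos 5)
Your proposal follows the classical Bombieri--Pila route: a single interpolation determinant, the crude bound $\deg_T\Delta\le n\,d_\W$, and then the crucial refinement --- a gain in the degree bound when the points agree in their top coefficients, to be obtained by Hasse-derivative expansion and Cauchy--Binet --- followed by a cell decomposition and an optimisation over cell size. That refinement is precisely the function-field analogue of Lemma 2 of \cite{b-pila} (equivalently, of Swinnerton-Dyer's mean value lemma), and it is exactly the step this paper identifies as unavailable: the introduction explains that one cannot follow Bombieri--Pila directly because there is no analogue of the mean value theorem over function fields, and that this is the reason for adopting the Helfgott--Venkatesh method instead. You yourself flag this step as ``the main obstacle'' but do not prove it; you only assert that antisymmetrisation makes the leading terms cancel. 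In characteristic $p$ this is not a routine verification (derivatives of $p$-th powers vanish, Wronskians can vanish identically, and the divided-difference identities behind the real-variable proof do not transfer), so the core of your argument is left as a conjecture. There is a secondary gap as well: your lower bound uses residue counts modulo prime powers, ``at most $\alpha|f|^{k}$ residues modulo $f^{k}$,'' but the hypothesis of the lemma bounds residues only modulo irreducible $f$; nothing is assumed about $f^{k}$, and the Hensel/Weil argument you sketch needs information the lemma does not supply.

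The paper's proof avoids both problems by adapting Proposition 3.1 of \cite{helf-ven} rather than Bombieri--Pila. Instead of one determinant per cell, it forms the product $\Delta=\prod_{\P}^{*}|W(\P)|$ over \emph{all} admissible $\om$-tuples of points of $S$, uses only the crude estimate $|W(\P)|\ll_\W|I|^{d_\W}$ at the infinite place (no refinement is ever needed), and obtains the competing lower bound arithmetically: for each irreducible $f$ with $|f|\le N$, coincidences of points modulo $f$ force
$$\ord_f\Delta\ \ge\ \sum_{\P}^{*}\k(\P),$$
a probabilistic computation evaluates $\sum_\P\k(\P)$ as roughly $\frac{\om(\om-1)}{2}|S|^{\om}\sum_P\r_P^2$, the inadmissible tuples are controlled by the $\W$-curve hypothesis (at most $C$ points on any $\W$-curve), Cauchy's inequality together with the $\alpha|f|$ residue hypothesis gives $\sum_P\r_P^2\ge 1/(\alpha|f|)$, and summing $\log|f|\cdot\ord_f\Delta$ over all irreducible $|f|\le N$ with the choice $N=|S|$ (a Mertens-type sum) overtakes the upper bound unless $|S|\ll_\W|I|^{\frac{2\alpha d_\W}{\om(\om-1)}+o_{\alpha,C}(1)}$. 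In short, averaging over all tuples and over many small primes is what substitutes for the missing mean value theorem; your proposal, as written, still depends on that missing ingredient and so has a genuine gap at its centre.
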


\begin{proof}
We are going to prove it in the spirit of \cite[Proposition 3.1]{helf-ven}.
Write $P=(X,Y)$ for a point in $(\F_q[T])^2$ with coordinates $X,Y \in \F_q[T]$.
Fixing an arbitrary ordering $F_1,F_2,\ldots,F_\om$ for the elements of $\W$, we define a function
$$W:((\F_q[T])^2)^\om \to \F_q[T]$$
by 
$$W(P_1,\ldots,P_\om)=\det(F_i(P_j))_{1\le i,j\le\om}.$$
Let $\P$ denote an ensemble of points in $S$: $\P=(P_1,\ldots,P_\om)$, $P_i=(X_i,Y_i) \in S$. We say that $\P$ is admissible if $W(\P) = W(P_1,\ldots,P_\om)\neq {\bf 0}$ (where ${\bf 0}$ stands for zero polynomial in $\F_q[T]$). Define
$$\Delta=\prod_{\P}^*|W(\P)|,$$
where $*$ means that we take the operation over all admissible $\P$.\\
By the definition of $d_\W$ we have 
$$|W(\P)| \ll_\W |I|^{d_\W}$$ 
for every $\P \in S^\om$. Taking $\log \Delta$ and applying the expression above gives
\begin{equation} \label{upbnd}
\frac{\log \Delta}{|S|^\om} =\frac{\sum_{\P}^* \log |W(\P)|}{|S|^\om} \le d_{\W} \log |I| + O_\W(1).
\end{equation}
Fix any irreducible polynomial $f$ with $|f| \le N$, where $N$ is to be set at the end. Then for every point $P \in (\F_q[T])^2$ let $\r_P$ be the fraction of points in $S$ that reduce to $P \mmod f$. For each $\P$ let $\k(\P) \in \{0,1,\ldots,\om-1\}$ be defined in a way that $\om-\k(\P)$ is the number of distinct points among the points $P_i \mmod f$. Then one can state
\begin{equation}\label{ord}
\ord_f \Delta \ge \sum_\P^*\k(\P) = \sum_\P\k(\P) - \sum_\P^{na}\k(\P),
\end{equation}
where the first sum on the right hand side is taken over all $\P$ and the second one is the sum over all inadmissible ensembles $\P$.

We are going to proceed in two steps. First, we will calculate the sum over all $\P \in S^\om$ by probabilistic methods. Here we see $P_1,\ldots, P_\om$ as $\om$ independent random variables with values in $(\F_q[T])^2$ and use
$$Y_{P} = \begin{cases} 1, & \mbox{if at least one of } P_i \in S/\{P\} \mbox{ is equal to } P \mmod f;\\ 0, & \mbox{otherwise. } \end{cases}$$
In the inadmissible case of $\P$ we have either at least two points $P_i=P_j$ among the entries of $\P$ or at least two points $P_i = P_j \mmod f$, $P_i, P_j \in \P$, $P_i \neq P_j$. The number of pairs $P_i,P_j$ that satisfy the first possibility can be easily bounded by $O(|S|^{\om-1})$ and for the latter case we permute the entries of our matrix in order to have
$$\det (F_i(P_j))_{1\le i,j\le l} \neq 0$$
of a maximal possible size $l$ and then apply the fact that any $\W$-curve contains at most constant number of elements of $S$.

Let us start with the sum over all $\P \in S^\om$. Consider $\P$ as a random variable with uniform distribution. Then the expected value of the number of distinct points among the $P_i \mmod f$ is equal to
$$\frac{\sum_\P(\om-\k(\P))}{|S|^\om}=\E\left(\sum_P Y_P\right).$$
Further,
\begin{equation*}
\begin{split}
& \E\left(\sum_P Y_P\right)=\sum_P \E(Y_P)=\sum_P \prob(\exists P_i | P_i \equiv P \mmod f) = \sum_P (1-\prob(\not\exists P_i | P_i \equiv P \mmod f))\\
& = \sum_P (1-\prob(\forall P_i | P_i \not\equiv P \mmod f))=\sum_P \left(1-\prod_i \prob(P_i \not\equiv P \mmod f) \right) = \sum_P \left(1-\prod_i (1-\r_P) \right)\\
& = \sum_P  \left(1-(1-\r_P)^\om \right).\\
\end{split}
\end{equation*}
We then have
$$\frac{\sum_\P(\om-\k(\P))}{|S|^\om} = \sum_P \left(1-(1-\r_P)^\om \right).$$
Next
$$\frac{\sum_\P \k(\P)}{|S|^\om} = \frac{\sum_\P \om}{|S|^\om}-\sum_P(1-(1-\r_P)^\om) = \sum_P((1-\r_P)^\om+\om \r_P-1).$$
Since
$$(1-\r_P)^\om+\om \r_P-1 = 1-\om \r_P + \left(\bfrac{\om}{2}\right) \r_P^2 + \ldots + (-1)^\om \left(\bfrac{\om}{\om}\right) \r_P^\om + \om \r_P-1 = \r_P^2\left(\left(\bfrac{\om}{2}\right) - o_{C, \om}(1)\right),$$
then
\begin{equation}\label{sumovall}
\frac{\sum_\P \k(\P)}{|S|^\om} = \frac {\om(\om -1)}{2}\sum_P \r_P^2 - o_{C,\om} \left( \sum_P \r_P^2 \right).
\end{equation}

Now let us bound the sum over all inadmissible $\P$. Consider the set of such $\P$ with $\k(\P) > 0$. Then one of the followings is true:
\begin{enumerate}
\item{There exist $i$ and $j$, such that $P_i=P_j$;}
\item{There exist $i$ and $j$, such that $P_i \equiv P_j\mmod f$, but $P_i \neq P_j$.}
\end{enumerate}
The total number of inadmissible $\P$, such that the first condition above holds is equal to $O(|S|^{\om-1})$.
Let us estimate this number for the second case. Permute the entries in such a way that $i=1$, $j=2$ and $F_1={\bf 1}$, $F_2(P_i) \neq F_2(P_j)$ (this is possible since we have assumed that the elements of $\W$ separate points and $\W$ contains ${\bf 1}$). Then for $l=2$
$$\det (F_i(P_j))_{1 \le i,j \le l} \neq 0.$$
Choose the maximal $l$, such that the above statement still holds.
Then $P_{l+1}$ lies on a $\W$ curve determined by $P_1,P_2,\ldots,P_l$. As we demanded, the number of possible values for $P_{l+1}$ is bounded above by a constant. Then the number of inadmissible $\P$, such that the second case takes place is equal to
$$O_\om(|S|^{\om-3} \delta),$$
where $\delta$ is the number of pairs $(Q_1,Q_2) \in S^2$ that reduce to the same point $\mmod f$. 
By the definition of $\r_P$ we have
$$\delta=|S|^2 \sum_P \r_P^2.$$
Summing two results we see that there are at most
\begin{equation} \label{inadm}
O_\om \left(|S|^{\om-1}+|S|^{\om-3} \delta\right) = 
O_\om \left(|S|^{\om-1} \left(1+\sum_P \r_P^2 \right)\right)= |S|^\om O_\om \left(|S|^{-1} \left(1+\sum_P \r_P^2 \right)\right)
\end{equation}
inadmissible $\P$ with $\k(\P)>0$.
Putting (\ref{sumovall}) and (\ref{inadm}) into (\ref{ord}) we have
\begin{equation*}
\begin{split}
&\frac{\ord_f \Delta}{|S|^\om} \ge \frac{\sum_\P \k(\P) - \sum_{\P}^{na} \k(\P) }{|S|^\om} \ge \left(\frac {\om(\om -1)}{2} - o_{C,\om}(1)\right)\sum_P \r_P^2 - O_\om \left( |S|^{-1} \left(1+\sum_P \r_P^2\right)\right).
\end{split}
\end{equation*}
Using Cauchy's inequality
$$\sum_P \r_P^2 \ge \frac{1}{\alpha |f|} \left(\sum_P \r_P \right)^2 = \frac{1}{\alpha |f|}$$
one can state
$$\frac{\ord_f \Delta}{|S|^\om} \ge \left(\frac{\om(\om-1)}{2} - o_{C, \om}(1)\right) \frac{1}{\alpha |f|} - O_{\om,\alpha, |f|} \left( |S|^{-1}\right).$$
Multiply the equation above by $\log |f|$ and sum over all $|f| \le N$: 
\begin{equation}\label{pred}
\sum_{|f| \le N} \log |f| \left(\frac{\om (\om-1)}{2} - o_{C, \om}(1)\right) \frac{1}{\alpha |f|} + O_{\om,\alpha} \left( |S|^{-1} \sum_{|f| \le N}\log |f|\right) \le \frac{\log \Delta}{|S|^\om}.
\end{equation}
As we know from (\ref{upbnd})
$$\frac{\log \Delta}{|S|^\om} \le d_\W \log |I| + O_\W(1).$$
Applying this estimate to (\ref{pred}) gives
$$\frac{\om(\om-1)}{2\alpha} \sum_{|f| \le N} \frac{\log |f|}{|f|} + O_{\om,\alpha} \left( |S|^{-1} \sum_{|f| \le N}\log |f|\right) -o_{C,\om,\alpha} \left(\sum_{|f| \le N}\frac{\log |f|}{|f|}\right)\le d_\W \log |I| + O_\W(1).$$
Taking $N = |S|$ we end with
$$|S| \ll_{\om,\W} |I|^{\frac{2\alpha d_\W}{\om(\om-1)} + o_{\alpha,C}(1)}.$$
\end{proof}

\begin{lemma}\label{lintr}
Let $\Cc$ be an irreducible algebraic curve of degree $d$ over $\F_q[T]$ which is defined by $F(X,Y)=0$. There exists a linear transformation
$$(X,Y) \to (X',Y')$$
such that $\deg_{X'} F(X',Y')=d$.
\end{lemma}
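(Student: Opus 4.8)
The plan is to use the fact that, although the constant field $\F_q$ is finite, the coefficient ring $\F_q[T]$ (and hence its fraction field $\F_q(T)$) is infinite; this gives enough room to choose the parameter of a shear transformation. Throughout I will work with the homogeneous (top-degree) part of $F$ and reduce the statement to avoiding the finitely many roots of a single univariate polynomial.

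First I would decompose $F$ into its homogeneous components in the variables $X,Y$, writing $F = F_d + F_{d-1} + \cdots + F_0$, where each $F_k$ is homogeneous of degree $k$ with coefficients in $\F_q[T]$ and $F_d \neq {\bf 0}$ because $\deg F = d$. Next, for a parameter $\lambda$ to be chosen I would apply the shear $(X,Y) \to (X',Y') = (X,\,Y-\lambda X)$, so that $X = X'$ and $Y = Y' + \lambda X'$, and set $G(X',Y') = F(X',\,Y'+\lambda X')$. Since this substitution is invertible (its matrix has determinant $1$), it preserves total degree, whence $\deg G = d$ and in particular $\deg_{X'} G \le d$.

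The key step is to compute the coefficient of the pure power $X'^d$ in $G$. Only $F_d$ can contribute to a monomial of total degree $d$, and a direct computation (setting $Y'=0$) gives that this coefficient equals $F_d(1,\lambda)$. Now $\lambda \mapsto F_d(1,\lambda)$ is a polynomial in $\lambda$ of degree at most $d$, and it vanishes identically only if every coefficient of $F_d$ is zero, contradicting $F_d \neq {\bf 0}$; hence it is a nonzero polynomial with at most $d$ roots in $\F_q(T)$. Because $\F_q[T]$ is infinite, I can choose $\lambda \in \F_q[T]$ avoiding these finitely many roots, so that $F_d(1,\lambda) \neq {\bf 0}$ and therefore $\deg_{X'} G = d$, which is the desired conclusion.

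I expect the only genuine obstacle to be precisely this finiteness issue, which is also why one cannot simply quote the classical statement over $\R$ or $\mathbb{Q}$ verbatim: over the finite field $\F_q$ one may have $q \le d$, in which case a nonzero form of degree $d$ can vanish at every $\F_q$-point and no scalar shear with $\lambda \in \F_q$ need work. The resolution is to let $\lambda$ range over the infinite ring $\F_q[T]$, where a nonzero univariate polynomial of degree at most $d$ cannot vanish everywhere; keeping $\lambda \in \F_q[T]$ also ensures that the transformation remains defined over $\F_q[T]$, as is appropriate for a curve over $\F_q[T]$.
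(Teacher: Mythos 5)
Your proof is correct, and it is in fact more complete than the paper's own argument. The paper applies a general invertible substitution $(X,Y)=(AX'+BY',CX'+DY')$ with $A,B,C,D\in\F_q[T]$, expands $F$, and then asserts that $\deg_{X'}F=\max(i+j-k-l)$, which equals $d$ because $\max(i+j)=d$. That step reads off the degree from the exponents appearing in the expansion \emph{before} collecting terms, and so silently ignores cancellation: the actual coefficient of $(X')^d$ (the only contributions have $k=l=0$, $i+j=d$) is $F_d(A,C)$, where $F_d$ is the top homogeneous part of $F$, and this can vanish for an unlucky choice of $A,C$ --- indeed, exactly in the case you flag, when $q\le d$ a nonzero form $F_d$ can vanish at every point of $\F_q^2$, so no verification-free choice is possible. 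Your shear $(X,Y)=(X',Y'+\lambda X')$ isolates and repairs precisely this point: the coefficient of $(X')^d$ is $F_d(1,\lambda)$, a nonzero polynomial in $\lambda$ of degree at most $d$, so it has at most $d$ roots, and since $\F_q[T]$ is infinite a suitable $\lambda\in\F_q[T]$ exists; combined with the fact that the invertible shear preserves total degree, this yields $\deg_{X'}G=d$. (One could minimally repair the paper's version the same way, by choosing $A,C$ with $F_d(A,C)\neq{\bf 0}$, which is the content your argument supplies.) So while both proofs proceed by a linear change of variables, yours contains the essential non-vanishing argument that the paper's proof omits, and correctly identifies the finiteness of $\F_q$ as the reason the parameter must be drawn from $\F_q[T]$.
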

\begin{proof}
We can assume $\deg_X F(X,Y) <d$, otherwise we are done.
Any polynomial of the form $F(X,Y) \in (\F_q[T])[X,Y]$ can be written as
$$F(X,Y) = \sum_{\bfrac{i\in J_1}{j \in J_2}} F_{ij}X^i Y^j,$$
where $J_1,J_2 \subset \{0,1,...,d\}$, $F_{ij} \in \F_q$ and $$\max_{\bfrac{i\in J_1}{j \in J_2}}(i+j)=\deg F = d, \;\; \max_{i \in J_1}i = \deg_X F<d.$$
Consider a linear transformation
$$(X,Y) \to (X',Y')$$
such that $(X,Y)=(AX'+BY',CX'+DY')$, where $A,B,C,D \in \F_q[T]$ with $AD-BC \neq {\bf 0}$.
Changing the variables $(X,Y) \to (X',Y')$
we obtain
\begin{equation*}
\begin{split}
& F(X,Y) = \sum_{\bfrac{i\in J_1}{j \in J_2}} F_{ij}(AX'+BY')^i (CX'+DY')^j \\
&=\sum_{\bfrac{i\in J_1}{j \in J_2}}  \sum_{k=0}^i \sum_{l=0}^j \left(\bfrac{i}{k}\right)\left(\bfrac{j}{l}\right) F_{ij} A^{i-k}B^{k}C^{j-l}D^{l} (X')^{i+j-k-l}(Y')^{k+l}.\\
\end{split}
\end{equation*}
In new variables $(X',Y')$ we have
$$\deg_{X'}F=\max_{\bfrac{k\in \{0,\ldots,i\},i\in J_1}{l\in \{0,\ldots,j\},l\in J_2}}(i+j-k-l),$$
which is equal to $d$, since $\max_{\bfrac{i\in J_1}{j \in J_2}}(i+j)=\deg F = d$.
\end{proof}

\section{Proof of the theorem}
We start with an interpolation argument, which is used for a similar goal in \cite{h-b}.
Let again $F \in (\F_q[T])[X,Y]$ be written in a form
$$F(X,Y) = \sum_{\bfrac{i\in J_1}{j \in J_2}} F_{ij}X^i Y^j,$$
where $J_1,J_2 \subset \{0,1,...,d\}$, $F_{ij} \in \F_q$.
We are counting the number of distinct lattice points $P=(X,Y) \in I^2 \cap \Cc$. If we have less than $r(d)=d^2+1$ such points, then we are done. Suppose that we have at least $r(d)$ points: $P_i=(X_i,Y_i) \in C\cap I^21,$ $i =1,\dots,r(d)$ with $F(P_i)={\bf 0}$. Denote by $n(d)=\frac{1}{2} (d+1)(d+2)$ the number of monomials of degree less or equal than $d$. Consider $n(d) \times r(d)$ matrix $A$, whose $i$-th row consists of the monomials of degree $d$ in the variables $X_i,Y_i$. Let $\vec{b} \in F_q^{n(d)}$ be a vector, whose entries are the corresponding coefficients $F_{ij}$ of $F(X,Y)$. For such a vector $\vec{b}$ we have an equation
$$A\vec{b}=\vec{0}.$$
Since $\vec{b}\neq \vec{0}$, then the matrix $A$ has a rank less than or equal to $n(d)-1$.
Thus there is a solution $\vec{g}\neq \vec{0}$, where $\vec{g}$ is constructed out of the minors of $A$ with $|\vec{g}| \ll_d |I|^{dn(d)}$. Let $G \in (\F_q[T])[X,Y]$ be the form of degree $d$ corresponding to the vector $\vec{g}$. Then $G(X,Y)$ and $F(X,Y)$ share $r(d)$ zeros (points $P_i$).  By B\'{e}zout's theorem it is possible only if $G$ is a multiple of $F$. Since $F$ is irreducible, then $G$ is also irreducible and defines the same curve $\Cc$. Let us work with $G$ instead of $F$.

We are going to proceed in two steps:
\begin{enumerate}
\item{If $\deg_X G < d$, then by Lemma \ref{lintr} we can change variables so that $\deg_{X'} G = d$. If not, then proceed to the next step.}
\item{Using Weil bounds we obtain
$$|\{(X,Y) \in (\F_q[T] \mmod f)^2:\; G(X,Y)=0 \mmod f\}| = |f|+O_d(\sqrt{|f|}).$$
Further, for every $\veps > 0$ and for every irreducible polynomial $f\in \F_q[T]$ with the condition $|f|\ge c(\veps)$ the set $S$ intersects at most $\left(1+\frac{\veps}{2}\right) |f|$ residue classes $\mmod f$ (here $c(\veps)$ is a constant that depends only on $\veps$).
Applying Lemma \ref{helf-ven} with $\alpha=1+\frac{\veps}{2}$ and $\W$ from Example \ref{wset}: $\W=\W_{d-1,M}$ we obtain
$$|S|\ll_{\veps,\W} |I|^{\frac{\left(1+\frac{\veps}{2}\right)(d+M-1)}{(d(M+1)-1))}+o_{\veps, C}(1)}.$$
We choose $M$ to be large enough and end with
$$|S|\ll_{\veps,\W} |I|^{\frac{1}{d}+\frac{3\veps}{4}+o_{\veps,C}(1)}.$$
}
\end{enumerate}

\section{An application to counting elliptic curves}
In this section we are going to proceed with counting the number of elliptic curves $E_{a,b}$ with coefficients $a,b $ living in a small box that lie in the same isomorphic classes. This is basically the generalization of several statements presented in \cite{shparlinski2}. Doing this we have an opportunity to apply Theorem \ref{thm} and also to show that some results for number fields can be also adapted to function fields. 

Let $I$ stand again for an interval of polynomials of the form $X(T)+Y(T)$, where $X(T) \in \F_q[T]$ is a fixed polynomial
and $Y(T) \in \F_q[T]$ runs through all polynomials of degree less or equal than $d$. The coefficients of $X$ and $Y$ belong to $\F_q$ just as in section 2. 

For a prime power $q$ we consider a family of elliptic curves $E_{a,b}$
$$E_{a,b}: Y^2=X^3+aX+b,$$
where $X$ and $Y$ belong to $\F_q[T]$ as before and $a$, $b$ are some coefficients from $\F_q[T]$ with the property that $4a^3+27b^2 \neq {\bf 0}$.
As in the number field case we say that two curves $E_{a,b}$ and $E_{c,d}$ are isomorphic if
$$at^4 \equiv c (\mmod f) \;\;\text{ and }\;\; bt^6 \equiv d \;(\mmod f).$$
The existence of an isomorphism between $E_{a,b}$ and $E_{c,d}$ implies that
\begin{equation}
\label{32} a^3 d^2 \equiv c^3 b^2 \;(\mmod f)
\end{equation}
for some $f\in \F_q[T]$.
We denote by $N(I^2)$ the number of solutions to (\ref{32}) with $(a,b),(c,d) \in I^2$.
Then for $\lambda \in \F_q[T]$ we write $N_{\lambda}(I^2)$ for the number of solutions to the congruence
$$a^3 \equiv \lambda b^2 \;(\mmod f), \;\; (a,b) \in I^2.$$
We are going to give an upper bound on $N_{\lambda}(I^2)$ that implies upper bounds for the number of elliptic curves $E_{a,b}$ with coefficients $a,b \in I$ that lie in the same isomorphic classes.

For a polynomial $X\in \F_q[T]$ and an irreducible polynomial $f\in \F_q[T]$ we use $\{X\}_f$ to denote
$$\{X\}_f=\min_{Y \in \F_q[T]} |X-fY|=\min_{Y \in \F_q[T]} q^{\deg (X-fY)}.$$

From Dirichlet pigeon-hole principle we obtain
\begin{lemma} \label{pigeon}
For real numbers $T_1,\ldots, T_s$ with $1 \le T_1,\ldots,T_s \le |f|$, $T_1 \cdots T_s \ge |f|^{s-1}$ and any polynomials $X_1,\ldots,X_s \in \F_q[T]$ there exists a polynomial $t\in \F_q[T]$ such that $t$ is not a multiple of $f$ and
$$\{X_i t\}_f\ll T_i,\;\;i=1,\ldots,s.$$
\end{lemma}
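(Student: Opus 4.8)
The plan is to read this as the function-field incarnation of Dirichlet's theorem on simultaneous Diophantine approximation and to prove it by the pigeon-hole principle in its $\F_q$-linear form (a linear map from a larger $\F_q$-space to a smaller one has nonzero kernel). Write $l=\deg f$. First I would record the meaning of $\{\cdot\}_f$: by polynomial division every $X$ has a unique representative $r$ of degree $<l$ with $X\equiv r\bmod f$, and for every $Y$ other than the quotient one has $\deg(X-fY)\ge l>\deg r$; hence $\{X\}_f=q^{\deg r}$, and $\{X\}_f\le q^{d}$ is precisely the assertion that $X\bmod f$ lies in the subspace $V^{(d)}\subseteq\F_q[T]/f$ of residues of degree $\le d$, which has $\F_q$-dimension $d+1$ when $d\le l-1$. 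Setting $d_i=\lfloor\log_q T_i\rfloor$ we have $q^{d_i}\le T_i$, so it suffices to produce $t$ with $f\nmid t$ and $\deg\big((X_it)\bmod f\big)\le d_i$ for all $i$; this already yields $\{X_it\}_f\le q^{d_i}\le T_i$.

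The key step is the map. I would consider the $\F_q$-linear map
$$\Phi:\{t\in\F_q[T]:\deg t<l\}\longrightarrow\bigoplus_{i=1}^s\big(\F_q[T]/f\big)\big/V^{(d_i)},\qquad \Phi(t)=\big((X_it\bmod f)+V^{(d_i)}\big)_{1\le i\le s}.$$
Its source has $\F_q$-dimension $l$ and its target has dimension $\sum_{i=1}^s\big(l-(d_i+1)\big)=sl-s-\sum_i d_i$. Whenever the source dimension strictly exceeds the target dimension, that is $\sum_i d_i>(s-1)l-s$, the kernel of $\Phi$ is nonzero. Any $0\ne t\in\ker\Phi$ satisfies $(X_it)\bmod f\in V^{(d_i)}$ for all $i$, i.e. $\deg((X_it)\bmod f)\le d_i$; and since $\deg t<l=\deg f$ while $t\ne0$, it cannot be a multiple of $f$. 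Such a $t$ is exactly what the lemma demands.

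Checking the dimension inequality from the hypotheses is then immediate: using $\lfloor x\rfloor>x-1$,
$$\sum_{i=1}^s d_i>\sum_{i=1}^s\big(\log_q T_i-1\big)=\log_q\Big(\prod_{i=1}^s T_i\Big)-s\ge\log_q|f|^{\,s-1}-s=(s-1)l-s,$$
which is precisely the required strict inequality.

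The only delicate point — and the place I expect to have to be careful — is the boundary bookkeeping when some $T_i$ is comparable to $|f|$. If $d_i\ge l-1$ then $V^{(d_i)}$ degenerates to all of $\F_q[T]/f$ and the formula $\dim V^{(d_i)}=d_i+1$ fails, so the naive count above is not valid verbatim. This is harmless in substance, because such a coordinate imposes no genuine constraint: one always has $\{X_it\}_f\le q^{l-1}\le T_i$ as soon as $T_i\ge|f|/q$. I would therefore run $\Phi$ only over the index set $J=\{i:T_i<|f|/q\}$, for which $d_i\le l-2$ and $\dim V^{(d_i)}=d_i+1$ honestly, and satisfy the coordinates outside $J$ for free. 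The inequalities $\prod_i T_i\ge|f|^{s-1}$ and $T_i\le|f|$ then give $\sum_{i\in J}\log_q T_i\ge(|J|-1)l$, whence the same strict inequality $\sum_{i\in J}d_i>(|J|-1)l-|J|$ needed to force $\ker\Phi\ne0$ on the restricted index set, which completes the argument.
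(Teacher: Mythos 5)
Your proof is correct, and it takes essentially the approach the paper intends: the paper states this lemma with no proof at all, merely attributing it to the Dirichlet pigeon-hole principle, and your $\F_q$-linear dimension count (a nonzero kernel of the map $t \mapsto (X_i t \bmod f \bmod V^{(d_i)})_i$) is precisely the function-field formalization of that principle. Your handling of the boundary indices with $T_i \ge |f|/q$, which impose no real constraint, correctly closes the only delicate point, so your argument supplies the details the paper omits.
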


Now we can give a good bound for $N_\lambda(I^2)$:
\begin{theorem} \label{1/9}
Let $I$ be an interval of polynomials of degree less or equal than $d$ with coefficients in $\F_q$ and the length of $I$ is $|I|=q^d$.
For any irreducible polynomial $f \in \F_q[T]$ such that $1 \le |I| \le |f|^{\frac{1}{9}}$ and for any $\lambda \in \F_q[T]$ we have
$$N_\lambda(I^2) \le |I|^{\frac{1}{3}+o(1)}.$$
\end{theorem}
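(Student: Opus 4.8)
The plan is to upgrade the congruence into an exact identity over $\F_q[T]$ and then to count the resulting solutions by a parametrisation argument. Suppose $(a_1,b_1)$ and $(a_2,b_2)$ both satisfy $a_i^3\equiv\lambda b_i^2\pmod f$; eliminating $\lambda$ gives $a_1^3b_2^2\equiv a_2^3b_1^2\pmod f$. Both sides are polynomials of degree at most $5d$, and since $|I|=q^d\le|f|^{1/9}$ we have $5d\le\tfrac{5}{9}\deg f<\deg f$. Hence $a_1^3b_2^2-a_2^3b_1^2$ is a multiple of $f$ of degree below $\deg f$, so it must vanish, giving the identity $a_1^3b_2^2=a_2^3b_1^2$ in $\F_q[T]$. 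In other words every solution with $b\neq{\bf 0}$ realises one and the same rational function $r=a^3/b^2\in\F_q(T)$.

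Next I would parametrise this common fibre. If $N_\lambda(I^2)\le1$ there is nothing to prove, and the solutions in which a coordinate vanishes reduce to $({\bf 0},{\bf 0})$ and contribute $O(1)$, so fix a reference solution $(a_0,b_0)$ with $a_0,b_0\neq{\bf 0}$. Every solution then satisfies $(a/a_0)^3=(b/b_0)^2$, and putting $w=a_0b/(ab_0)\in\F_q(T)$ one checks directly that $w^2=a/a_0$ and $w^3=b/b_0$, so that $a=a_0w^2$, $b=b_0w^3$, and the map $(a,b)\mapsto w$ is injective. Writing $w=P/Q$ in lowest terms, the fact that $a$ and $b$ are polynomials forces $Q^2\mid a_0$ and $Q^3\mid b_0$.

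It then remains to count the admissible pairs $(P,Q)$. The number of denominators $Q$ with $Q^2\mid a_0$ is at most the number of divisors of $a_0$, which is $|a_0|^{o(1)}\le|I|^{o(1)}$. For each such $Q$ the divisibility $Q^3\mid b_0$ gives $3\deg Q\le\deg b_0\le d$, while the requirement $\deg b=\deg b_0+3(\deg P-\deg Q)\le d$ then forces $\deg P\le d/3$; hence there are at most $q^{\lfloor d/3\rfloor+1}=|I|^{1/3+o(1)}$ numerators $P$. Multiplying the two bounds yields $N_\lambda(I^2)\le|I|^{1/3+o(1)}$, as claimed.

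The decisive step, and the only point at which the size hypothesis enters, is the passage from the congruence to the exact identity $a_1^3b_2^2=a_2^3b_1^2$: here $|I|\le|f|^{1/9}$ (indeed anything up to $|f|^{1/5}$) is precisely what keeps the relevant degrees below $\deg f$. After that the crux is the clean rational parametrisation of the curve $u^3=v^2$ over $\F_q(T)$, the remaining count being routine once the divisor bound is invoked; note that the cubic dependence $b=b_0w^3$ supplies the binding degree constraint and is exactly what produces the exponent $\tfrac13$ (the choice $a_0=T^2$, $b_0=T^3$, with solutions $a=s^2$, $b=s^3$, shows the exponent is sharp up to $|I|^{o(1)}$, the rounding factor being absorbed in the regime $\deg f\to\infty$). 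I would not need the Dirichlet pigeon-hole Lemma \ref{pigeon} along this route, although it offers an alternative way to compress solutions before extracting an identity.
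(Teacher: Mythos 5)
Your argument collapses at its decisive first step. You pass from the congruence $a_1^3b_2^2\equiv a_2^3b_1^2\pmod f$ to the exact identity $a_1^3b_2^2=a_2^3b_1^2$ on the grounds that both sides have degree at most $5d<\deg f$. That degree bound presumes that every element of $I$ has degree at most $d$, i.e.\ that $I$ is the box at the origin. But in this paper an ``interval'' is a translate $I=\{X_0+Y:\ \deg Y\le d\}$ of that box by an \emph{arbitrary} fixed polynomial $X_0$ (see the definition at the start of Section 4; the paper's own proof accordingly starts from the congruence $(X+X_0)^3\equiv\lambda(X_0+Y)^2\pmod f$). The shift $X_0$ may have degree as large as $\deg f-1$, so $a_1^3b_2^2-a_2^3b_1^2$ can have degree near $5\deg f$, and its divisibility by $f$ in no way forces it to vanish. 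What you prove is the theorem only for intervals centred at the origin --- a much weaker statement that misses the point of the result, namely its validity for boxes located anywhere, which is what the thin-families application to isomorphism classes of elliptic curves requires. (A secondary issue: your claim that solutions with a vanishing coordinate contribute $O(1)$ also silently assumes $\lambda\not\equiv\mathbf{0}\pmod f$.)

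Overcoming the general shift is precisely what the paper's proof is built for, and is why the tools you declare unnecessary are in fact essential. Since neither $X_0$ nor $\lambda$ is small, the congruence cannot be lifted to an identity directly; the paper instead expands it in the local coordinates $X,Y$ of degree $\le d$, applies the pigeon-hole Lemma \ref{pigeon} to produce a multiplier $t$, not divisible by $f$, for which $t,\,3X_0t,\,3X_0^2t,\,\lambda t,\,2\lambda X_0t$ all have small representatives modulo $f$, and only then lifts the multiplied congruence to an exact equation $f_1X^3+f_2X^2+f_3X+f_4Y^2+f_5Y+f_6=fZ$ with $|Z|\ll 1$; the hypothesis $|I|\le|f|^{\frac19}$ enters exactly here, making the choice $T\approx|f|^{\frac15}/|I|^{\frac45}$ admissible and forcing $|Z|\ll 1$. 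The solutions then lie on $O(1)$ cubic curves over $\F_q[T]$, and Theorem \ref{thm} supplies the exponent $\frac13$. Your parametrisation of $u^3=v^2$ via $w=a_0b/(ab_0)$, combined with the divisor bound, is a clean way to finish once an exact identity is available (and your sharpness example matches the paper's remark about $E_{x^2,x^3}$); but for general intervals the conversion of the congruence into an identity is the whole difficulty, so the pigeon-hole step you propose to skip is not optional.
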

\begin{proof}
We have to estimate the number of solutions to 
$$(X+X_0)^3 \equiv \lambda (X_0+Y)^2 \;(\mmod f).$$
This congruence is equivalent to
\begin{equation} \label{congr}
X^3 + 3XX_0^2 +3X^2X_0 - \lambda Y^2 - 2\lambda X_0Y \equiv \lambda X_0^2 - X_0^3 \; (\mmod f).
\end{equation}
For any $T \le q^{\frac{1}{4}} / |I|^{\frac{1}{2}}$ we can apply Lemma \ref{pigeon} to 
$$X_1=1,\;X_2=3X_0,\;X_3=3X_0^2,\;X_4=-\lambda,\;X_5=-2\lambda X_0$$
and
$$T_1=T^4 |I|^2,\;T_2=T_4=\frac{|f|}{T|I|}, T_3=T_5=\frac{|f|}{T}$$
and find that there exists $t$ with $|t|\le T^4 |I|^2$ such that

$$\{3X_0 t\}_{f} \le \frac{|f|}{T|I|},\; \{3X_0^2 t\}_{f} \le \frac{|f|}{T},\;\{\lambda t\}_{f} \le \frac{q}{T|I|},\; \{2\lambda X_0 t\}_{f} \le \frac{|f|}{T}.$$
For $i=1,\ldots,5$ denote by $f_i$ a polynomial which satisfies $f_i=X_i t$.
Then multiply (\ref{congr}) by $t$ leads us to the equality
\begin{equation} \label{congrf}
f_1 X^3 + f_2 X^2 +f_3 X + f_4 Y^2 +f_5 Y + f_6 = |f| Z,
\end{equation}
where 
$$|f_1| \le T^4 |I|^2,\;|f_2|,|f_4| \le \frac{|f|}{T|I|},\;|f_3|,|f_5| \le \frac{|f|}{T}, \;|f_6| \le \frac{|f|}{2}.$$
Since for $X,Y \in I$ we have $|X|,|Y| \le |I|$, then the left hand side of (\ref{congrf}) is bounded above by $T^4 |I|^5 + \frac{4|f||I|}{T} + \frac{|f|}{2}$. Thus 
$$|Z| \ll \frac{T^4 |I|^5}{|f|} + \frac{4|I|}{T} +1.$$
Choosing $T \approx \frac{|f|^{\frac{1}{5}}}{|I|^{\frac{4}{5}}}$ and applying the condition $1 \le |I| \le |f|^{\frac{1}{9}}$ we end with the bound
$$|Z| \ll \frac{|I|^{\frac{9}{5}}}{q^{\frac{1}{5}}}+1 \ll 1.$$

\end{proof}

Application of Theorem \ref{1/9} to the family of curves $E_{x^2,x^3}$ with $|x| \le |I|^{\frac{1}{3}}$ shows that the result of Theorem \ref{1/9} can not be improved.
Thus in general we are not able to get any bound stronger than $N_\lambda (I^2) = O(|I|^{\frac{1}{3}})$.


\end{document}